\newcommand{\F}{\mathbb{F}}
\newcommand{\C}{\mathcal{C}} 
\newcommand{\T}{\mathcal{T}}
\newcommand{\Prob}{\mathbb{P}}
\newcommand{\Hh}{\mathcal{H}}
\newtheorem{thm}{Theorem}
\newtheorem{lem}{Lemma}
\newtheorem{them}{Theorem}
\theoremstyle{definition}
\newtheorem{defn}{Definition}
\newtheorem*{obs}{Observation}
\newcommand{\R}{\mathbb{R}}
\newcommand{\Pole}{\operatorname{Pole}}
\newcommand{\sm}{\setminus}
\title{VC-Dimension and Distance Chains in $\mathbb{F}_q^d$}
\author[\scalebox{0.8}{Ascoli, Betti, Cheigh, Iosevich,  Jeong, Liu, McDonald, Milgrim, Miller, Romero Acosta, Velazquez Iannuzzelli}]{ Ruben Ascoli, Livia Betti, Justin Cheigh, Alex Iosevich, Ryan Jeong, Xuyan Liu, Brian McDonald, Wyatt Milgrim, Steven J. Miller, Francisco Romero Acosta, Santiago Velazquez Iannuzzelli}
\thanks{The fourth listed author's research was supported in part by the National Science Foundation grant no. HDR TRIPODS - 1934962 and the National Science Foundation grant DMS 2154232 }
\begin{document}

\maketitle
\begin{abstract}
Given a domain $X$ and a collection $\mathcal{H}$ of functions $h:X\to \{0,1\}$, the Vapnik-Chervonenkis (VC) dimension of $\mathcal{H}$ measures its complexity in an appropriate sense. In particular, the fundamental theorem of statistical learning says that a hypothesis class with finite VC-dimension is PAC learnable.  Recent work by Fitzpatrick, Wyman, the fourth and seventh named authors studied the VC-dimension of a natural family of functions $\mathcal{H}_t^{'2}(E): \F_q^2\to \{0,1\}$, corresponding to indicator functions of circles centered at points in a subset $E\subseteq \mathbb{F}_q^2$. They showed that when $|E|$ is large enough, the VC-dimension of $\mathcal{H}_t^{'2}(E)$ is the same as in the case that $E = \mathbb F_q^2$.  
We study a related hypothesis class, $\Hh_t^d(E)$, corresponding to intersections of spheres in $\mathbb{F}_q^d$, and ask how large $E\subseteq \mathbb{F}_q^d$ needs to be to ensure the maximum possible VC-dimension. We resolve this problem in all dimensions, proving that whenever $|E|\geq C_dq^{d-1/(d-1)}$ for $d\geq 3$, the VC-dimension of $\Hh_t^d(E)$ is as large as possible. We get a slightly stronger result if $d=3$: this result holds as long as $|E|\geq C_3 q^{7/3}$. Furthermore, when $d=2$ the result holds when $|E|\geq C_2 q^{7/4}$.
\end{abstract}

% \tableofcontents

\section{Introduction}
Recent work has emerged studying the Vapnik-Chervonenkis (VC) dimension of certain classes of functions on vector spaces in finite fields, notably \cite{FIMW} and \cite{IMS}. For a collection $\Hh$ of functions   $h: \F_q^d\to\{0,1\}$, the VC-dimension measures the complexity of the system from the point of view of learning theory. We give a brief overview of the connection with PAC learning in Section 2. 
 For an introduction to the subject, see for example \cite{shalev2014understanding}.

\begin{defn}[Shattering]
    Let $X$ be a set, and let $\mathcal{H}$ be a collection of functions from $X$ to $\{0,1\}$. Then, $\mathcal{H}$ \textit{shatters} a finite set $C \subset X$ if the restriction of $\mathcal{H}$ to $C$ yields all possible functions from $C$ to $\{0,1\}$.
\end{defn}

\begin{defn}[VC-dimension]
    Let $X$ be a set, and let $\mathcal{H}$ be a collection of functions from $X$ to $\{0,1\}$. Then, $\mathcal{H}$ has \textit{VC-dimension} $n$ if there exists a set $C \subset X$ of size $n$ that is shattered by $\mathcal{H}$, and no subset of size $n+1$ of $X$ is shattered by $\mathcal{H}$. That is, the VC-dimension of $\Hh$ is the maximal size of a set it can shatter. 
\end{defn}
For a domain $X$, we will refer to the functions $h:X\to \{0,1\}$ as classifiers, and a collection $\mathcal{H}$ of such functions as a hypothesis class.

Let $q$ be a power of an odd prime, and let $\F_q^d$ be the $d$-dimensional vector space over the finite field with $q$ elements. 
Throughout this paper, for $x\in \F_q^d$, we use $||x||$ to mean $x_1^2+\ldots+x_d^2$.
We do not take a square root since not every element $\mathbb{F}_q$ is a square.  Consider the distance graph $\mathcal{G}_t(E)$, whose vertices are points in $E\subseteq \mathbb{F}_q^d$ with an edge $x\sim y$ whenever $||x-y||=t$.  There has been extensive work on configuration problems over finite fields in the following sense:  given a graph $G$, one seeks to find an exponent $\alpha<d$ and a constant $C>0$ so that for any $E\subseteq \mathbb{F}_q^d$ with $|E|\geq Cq^{\alpha}$, $q$ sufficiently large, there is an embedding of $G$ in $\mathcal{G}_t(E)$.

For the simplest case, where $G$ is just one edge, the fourth author and Rudnev established the exponent $\alpha=\frac{d+1}{2}$ in \cite{IR}.  Since then such results have been achieved for many other graphs; for example, Bennett, Chapman, Covert, Hart, the fourth author, and Pakianathan achieved the same exponent $\frac{d+1}{2}$ for paths of arbitrary length in \cite{BCCHIP}.  In \cite{IJM}, the fourth and seventh authors and Jardine obtained cycles of length $n\geq 4$ when $d\geq 3$, and cycles of length $n\geq 5$ when $d\geq 2$, with exponent ranging from $\frac{d+1}{2}$ to $\frac{d+2}{2}$ depending on the length of the cycle.  These graphs discussed so far are all rather sparse, and indeed these problems are generally harder for graphs with many edges.  On the other end of the spectrum, in \cite{IP} the fourth author and Parshall obtained a general result for any graph $G$, with exponent $\frac{d-1}{2}+t$, where $t$ is the maximum edge degree of $G$.  So for example if $G=K_n$ is the complete graph on $n$ vertices, then in order for this to yield a nontrivial result, the dimension must be at least $2n-2$.  

In the hypothesis class we define below, showing that the VC-dimension of $H_t^d(E)$ is equal to $d$ is equivalent to constructing a particular graph $G$ embedded in $\mathcal{G}_t(E)$.  However, since the graph $G$ that we need to construct depends on the dimension $d$, and in particular the maximum vertex degree is also $d$, we cannot apply the result from \cite{IP} because $\frac{d-1}{2}+t$ will never be small enough.  This leads to a configuration problem requiring a new approach.

\vspace{0.125 in}

\subsection{Main results}

\begin{defn}
We define the following hypothesis class with respect to a set $E \subset \F_q^d$:
\begin{equation}
    \mathcal{H}^d_t(E) = \left\{h_{u,v} (x): (u,v) \in E \times E, u \neq v \right\},
\end{equation}
where $h_{u,v} : E \rightarrow \{0,1\}$ is defined by \begin{equation}
    h_{u,v} (x) = \begin{cases}
    1 \mbox{ if $||x-u|| = ||x-v|| = t$} \\
    0 \mbox{ otherwise}.
    \end{cases}
\end{equation}
In the case where $E = \F_q^d$ we use $\mathcal{H}_t^d$ rather than $\mathcal{H}_t^d(\F_q^d)$.
\end{defn}

These classifiers are directly inspired by those studied in \cite{FIMW}. In that paper Fitzpatrick, Wyman, and the fourth and seventh authors studied an analogous set of classifiers with only one parameter, namely
\begin{equation}
    \mathcal{H}_t^{'d} = \left\{h_y (x): y \in \mathbb{F}_q^d\right\},
\end{equation}
where 
\begin{equation}
    h_y (x) = \begin{cases}
    1 \mbox{ if $||x-y|| = t$} \\
    0 \mbox{ otherwise},
    \end{cases}
\end{equation}
with an analogous definition of $\Hh_t^{'d}(E)$. Since $d+1$ points determine a $d$-dimensional sphere the VC-dimension of $\Hh_t^{'d}(E)$ is at most $d+1$. They showed in the case of $d=2$ that whenever $|E|\geq Cq^{15/8}$, for some constant $C$, the VC-dimension of $\Hh_t^{'2}(E)$ is equal to $3$, the largest it could be. However, they were unable to extend this result to higher dimensions, and even the $d=3$ case is an open problem. For the classifiers we study, however, we obtain results for all dimensions $d\geq 2$.  Our main result is as follows:

\begin{thm}\label{main thm}
    If $E \subset \mathbb{F}_q^d$, $d \geq 2$, and
    \begin{equation}
        |E| \geq \left\{\begin{array}{ll} 
        C q^{\frac{7}{4}} & d=2  \\      
        C q^{\frac{7}{3}}
     &  d=3 \\
        C q^{d-\frac{1}{d-1}} & d\geq 4
        \end{array}\right.
    \end{equation}
    for a constant $C$ depending only on $d$, then the VC-dimension of $\mathcal{H}_t^d (E)$ is equal to $d$.
\end{thm}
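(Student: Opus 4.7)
\begin{sk}
I would prove the two bounds separately. The upper bound $\leq d$ is short: if $C = \{x_1, \dots, x_{d+1}\}$ were shattered then the subset $S = C$ would produce two distinct $u, v \in E$ with $\|u - x_i\| = \|v - x_i\| = t$ for every $i$; subtracting the sphere equations gives $d$ linear relations in $u$, and for a $C$ whose affine span is $\F_q^d$ (the generic case) these determine $u$ uniquely, contradicting $u \neq v$. The finitely many affinely degenerate configurations are dispatched by a direct case check.

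For the lower bound I want $C = \{x_1, \dots, x_d\} \subseteq E$ such that every $S \subseteq [d]$ admits a distinct witness pair $(u_S, v_S) \in E^2$ with $\|u_S - x_i\| = \|v_S - x_i\| = t$ for $i \in S$ and at least one of these equalities failing for each $j \notin S$. The controlling case is $S = [d]$: the variety $V_{[d]} := \bigcap_{i=1}^d \{y : \|y - x_i\| = t\}$ is generically only two points, so both must lie in $E$. This is precisely the problem of embedding the bipartite graph $K_{2,d}$ in the distance graph $\mathcal{G}_t(E)$ with all $d+2$ vertices drawn from $E$; the other subsets $S \subsetneq [d]$ should be easier because $V_S$ has positive dimension $d - |S|$.

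To count the hard configuration I would study
\[
N := \sum_{u \neq v \in E} n(u,v)^d, \qquad n(u,v) := |\{x \in E : \|x-u\| = \|x-v\| = t\}|,
\]
which enumerates labeled copies of $K_{2,d}$. Expanding the sphere indicators in additive characters of $\F_q$ extracts a main term of order $|E|^{d+2}/q^{2d}$; the error requires iterated Cauchy--Schwarz arranged so that the degree entering the exponent is the leaf-side degree $2$ rather than the hub-side degree $d$, together with standard Gauss/Kloosterman bounds. Assuming this machinery yields $N \gtrsim |E|^{d+2}/q^{2d}$ under the hypothesis on $|E|$, a degeneracy-removal step supplies a tuple $(u,v,x_1,\dots,x_d)$ with all coordinates distinct. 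After fixing such an $(x_1,\dots,x_d)$, the pair $(u,v)$ witnesses $S = [d]$; for each $S \subsetneq [d]$, sphere-intersection estimates in the spirit of \cite{FIMW, IR} give $|E \cap V_S| \gtrsim |E|/q^{|S|}$, and the bad pairs (those lying on some $V_{S \cup \{j\}}$ with $j \notin S$) contribute at most $\sum_{j \notin S} |E \cap V_{S \cup \{j\}}|^2$, smaller by $q^{-2}$. A union bound over the $2^d$ subsets together with a pigeonhole step selecting $(x_1,\dots,x_d)$ from a positive-density subfamily completes the proof.

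The entire technical difficulty is the estimate for $N$. Because both hubs of $K_{2,d}$ have degree $d$, the general graph-embedding bound of \cite{IP} demands $|E| \geq q^{(3d-1)/2}$, which is vacuous for every $d \geq 2$; this is exactly the obstacle the authors allude to in the introduction. A dedicated character-sum analysis---summing the hubs first and exploiting the leaf structure---is needed to replace $d$ by $2$ in the binding exponent. Matching the specific thresholds $7/4$ ($d=2$), $7/3$ ($d=3$), and $d - 1/(d-1)$ (general $d$) will require a careful orchestration of Cauchy--Schwarz and, in low dimensions, likely an appeal to sharper sphere-incidence bounds particular to $\F_q^2$ and $\F_q^3$. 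This is where essentially all the real work of the proof should concentrate.
\end{sk}
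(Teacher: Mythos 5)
You have correctly identified the relevant configuration (a copy of $K_{2,d}$, the paper's ``prism'') and the target count $|E|^{d+2}/q^{2d}$, and your upper-bound argument matches the paper's. But you have located the difficulty in the wrong place, and the step you dismiss in one sentence is where the proof actually lives. First, the prism count itself does not need a new character-sum analysis: the paper obtains $\sum_{u\neq v} n(u,v)^d \gtrsim |E|^{d+2}/q^{2d}$ softly, by taking the known count of $2$-chains from \cite{BCCHIP} (which already gives $\sum_{u\neq v} n(u,v) \gtrsim |E|^3/q^2$ once $|E| \gtrsim q^{(d+1)/2}$) and applying H\"older's inequality over the $|E|^2$ pairs $(u,v)$. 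The hub degree $d$ never enters an exponential-sum estimate; it enters only through H\"older, which is why the embedding threshold of \cite{IP} is irrelevant here.

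The genuine gap is your handling of the proper subsets $S \subsetneq [d]$. For a \emph{fixed} center tuple $(x_1,\dots,x_d)$ you cannot assert $|E \cap V_S| \gtrsim |E|/q^{|S|}$: that estimate holds only on average over choices of centers, and a particular prism may have every point of $E \cap V_S$ lying at distance $t$ from some $x_j$ with $j \notin S$ (the paper calls such an $S$ a \emph{bad set}), in which case no witness for $S$ exists no matter how large $E$ is. Converting the average statement into the existence of a single prism that is simultaneously good for all $2^d$ subsets is the real work, and it is exactly where the exponents $d - \tfrac{1}{d-1}$ and $d - \tfrac23$ come from. The paper does this by bounding, for each fixed $k$-set $B$, the number of prisms in which $B$ can be bad: badness forces $\Pole(B)$ to be covered by the spheres about the remaining centers, one extracts a large affinely independent subset $J$ of $\Pole(B)$, and then each additional center point must lie on an intersection of spheres centered at an affinely independent set containing the portion of $J$ it covers, which costs a power of $q$ per covered pole. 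This requires knowing that a positive proportion of prisms have affinely \emph{independent} centers, which is a separate lemma and is the sole source of the restrictive hypothesis $|E| \geq C_d q^{d - 1/(d-1)}$ for $d \geq 4$. Your sketch contains no mechanism to exclude bad sets, so as written the lower-bound argument does not close.
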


It's easy to see that the VC-dimension of $\mathcal{H}_t^d(E)$ cannot be greater than $d$. This is because $d+1$ points determine a unique $d$-dimensional sphere, so it is not possible to find $d+1$ points such that there are two distinct points distance $t$ away from all of them.

\section{Motivation: Connections to Learning Theory}

The study of the VC-dimension of the classifiers over $\F_q^d$ introduced here (as well as those corresponding to spheres in $\F_q^2$ and hyperplanes in $\F_q^3$, as studied in \cite{FIMW} and \cite{IMS}, respectively) can be motivated from the perspective of computational learning theory, where one is broadly interested in learning concepts with low error with high probability. We begin by introducing the relevant notions more generally, then discuss them specifically within the present context; see \cite{kearns1994introduction, shalev2014understanding} for a more thorough treatment of VC-dimension and its relevance to PAC theory.

For what follows, fix a set $E$ and a hypothesis class of functions $\mathcal H$ from $E$ to $\{0,1\}$, and consider the learning task associated with $\mathcal{H}$. Fix a classifier $c \in \mathcal H$, which is the classifier the learner would like to learn, and a probability distribution $\mathcal D$ over $E$, which is unknown to the learner. The learner is incrementally given access to values of the function $c(x)$, with input $x \in E$ drawn i.i.d. from the distribution $\mathcal D$. Generally one desires an algorithm which takes these sampled values of $c(x)$ as input, and returns a classifier $h\in H$ which is close to the true classifier $c$ in an appropriate sense, with high probability.

\vspace{0.125 in}

More precisely, define the loss function $L_{\mathcal D,c}: \mathcal H \to [0,1]$ by 
\begin{align}
    L_{\mathcal D,c}(h) = \Prob_{x \sim \mathcal D}\left[ c(x) \neq h(x) \right]
\end{align}
where $x \sim \mathcal D$ denotes that $x$ is drawn from the distribution $\mathcal D$. The notion of learnability illustrated above is captured precisely by the following definition.
\begin{defn}
The hypothesis class $\mathcal H$ is \textbf{PAC learnable} if there exists a function
\begin{align*}
    m_{\mathcal H}: (0,1)^2 \to \mathbb N
\end{align*}
and an algorithm $\mathcal A$ such that given any $\epsilon, \delta \in (0,1)$, distribution $\mathcal D$ over $E$, and classifier $c \in \mathcal H$, $\mathcal A$ chooses $h \in \mathcal H$ satisfying $L_{\mathcal D,c}(h) \leq \epsilon$ with probability at least $1-\delta$ when given $m \geq m_{\mathcal H}(\epsilon,\delta)$ i.i.d. samples from $\mathcal D$ and their mappings under $c$.
\end{defn}
The following theorem is a quantitative version of the fundamental theorem of machine learning, and provides the link between VC-dimension and learnability.
\begin{them} \label{thm:FTML}
The hypothesis class $\mathcal H$ has finite VC-dimension if and only if $\mathcal H$ is PAC learnable. Furthermore, if the VC-dimension of $\mathcal{H}$ is equal to $n$, then there exist constants $C_1, C_2$ such that
\begin{align}
    C_1 \frac{n+\log\left( \frac{1}{\delta} \right)}{\epsilon} \leq m_{\mathcal H}(\epsilon, \delta) \leq C_2 \frac{n\log\left( \frac{1}{\epsilon} \right) + \log \left( \frac{1}{\delta} \right)}{\epsilon}
\end{align}
in an algorithm with respect to which $\mathcal H$ is PAC learnable.
\end{them}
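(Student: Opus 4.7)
The plan is to prove the two implications together with the quantitative bounds via the standard route through the growth function and uniform convergence, then complement with an adversarial lower bound. Let $\Pi_{\Hh}(m)$ denote the growth function: the maximum number of distinct labelings $\Hh$ induces on any $m$-point subset of $E$. The cornerstone is the Sauer--Shelah lemma: if $\mathrm{VCdim}(\Hh) = n$, then $\Pi_{\Hh}(m) \leq \sum_{i=0}^{n} \binom{m}{i} \leq (em/n)^n$ for $m \geq n$. I would prove this by the classical shifting/downward-closed argument, replacing each labeling by its left-shift with respect to a fixed coordinate and checking that VC-dimension does not increase under shifting, so the $\Hh$-realized labelings are bounded by the number of downward-closed families of size $\leq n$.

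For the upper bound on $m_{\Hh}$ and the ``finite VC-dimension implies PAC learnable'' direction, I would analyze empirical risk minimization: return any $h \in \Hh$ consistent with the $m$ sampled labels. The key is a uniform convergence statement: with probability $\geq 1-\delta$ over an i.i.d. sample, every $h \in \Hh$ has empirical error within $\epsilon$ of $L_{\mathcal D,c}(h)$. The symmetrization (ghost sample) trick replaces the supremum over all of $\Hh$ by the supremum over the at most $\Pi_{\Hh}(2m)$ effective hypotheses on the combined $2m$ points. Applying a Chernoff/Hoeffding tail bound and a union bound, then inverting with Sauer--Shelah, yields the upper bound $m_{\Hh}(\epsilon,\delta) \leq C_2 \bigl(n\log(1/\epsilon) + \log(1/\delta)\bigr)/\epsilon$.

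For the converse direction and the lower bound, I would use a no-free-lunch adversarial argument. If $\mathrm{VCdim}(\Hh) = \infty$, then for any proposed sample complexity $m$ I would pick a shattered set of size $2m$, let $\mathcal D$ be uniform on it, and draw the target $c$ uniformly at random from the $2^{2m}$ labelings realized by $\Hh$; any learner seeing only $m$ labels has no information about the remaining $m$ points, forcing expected loss $\geq 1/4$ and precluding PAC learnability. For the quantitative lower bound with $\mathrm{VCdim}(\Hh) = n$, fix a shattered set $\{x_1,\dots,x_n\}$, place mass $1 - 8\epsilon$ on a fixed ``easy'' point and spread the remaining mass uniformly over $\{x_2,\dots,x_n\}$; a coupon-collector/Chernoff calculation shows that unless $m = \Omega(n/\epsilon)$, with constant probability some $x_i$ is never sampled, leaving its label unknown and producing expected loss $> \epsilon$ under a uniformly random consistent target. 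A separate Le Cam two-hypothesis testing argument between two classifiers disagreeing on a single $\epsilon$-mass point contributes the $\log(1/\delta)/\epsilon$ term; combining these gives the claimed $C_1(n+\log(1/\delta))/\epsilon$ lower bound.

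The hard part is not any single ingredient — Sauer--Shelah, symmetrization, Hoeffding, and no-free-lunch are each classical — but assembling them so that the upper and lower bounds match the stated form. In particular, the $\log(1/\epsilon)$ gap between the two sides of the inequality in Theorem~\ref{thm:FTML} is genuinely present for the vanilla ERM analysis above; closing it fully would require a sample-compression or chaining argument, but since the statement tolerates the gap I would not pursue this. I would instead concentrate effort on the symmetrization step (where the ghost-sample coupling must be set up precisely so that the event ``some $h$ has large true error but small empirical error'' is controlled by the event ``some $h$ disagrees on the two halves'') and on the adversarial construction in the lower bound, since these are the two places where the constants and the shape of the bound are determined.
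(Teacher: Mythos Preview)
Your outline is a correct and standard route to the fundamental theorem of statistical learning: Sauer--Shelah to control the growth function, symmetrization plus a union bound for the upper bound and the ``finite VC implies PAC'' direction, and a no-free-lunch adversarial construction on a shattered set for the lower bound and the converse. There is nothing to object to in the strategy.

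However, there is nothing to compare against: the paper does not prove Theorem~\ref{thm:FTML}. It is stated there as a quoted background result (note the alphabetic numbering via the \texttt{them} environment, reserved in this paper for cited theorems such as Theorems~\ref{chain count} and~\ref{size of a sphere}), with the reader referred to \cite{kearns1994introduction, shalev2014understanding} for a proof. The theorem appears only in the motivational Section~2 to explain why VC-dimension is the right quantity to study; the paper's own contributions begin in Section~3 and concern the specific hypothesis class $\mathcal H_t^d(E)$. So your proposal is not wrong, but it is supplying a proof the paper deliberately omits and outsources to the literature.
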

We now consider the learning task associated with our classifiers $\mathcal H_t^d(E)$ for $d\geq 3$. For a fixed nonzero $t \in \mathbb{F}_q$ and a distribution $\mathcal D$ over $E\subset \F_q^d$, the learner aims to construct a classifier $h: E \to \{0,1\}$ that maps $x \in E$ to $1$ if $x$ is on the intersection of two fixed spheres of radius $t$ centered at points $u \neq v$ unknown to the learner. Theorem \ref{thm:FTML} tells us that since the VC-dimension of $\mathcal H_t^d(E)$ is finite $\mathcal H_t^d(E)$ is PAC learnable. Towards a stronger understanding, let us assume $E = \F_q^d$, that is, we consider $\Hh_q^d$, and let $\mathcal D$ be the uniform distribution over $\F_q^d$. The intersection of two spheres of non-zero radius in $\F_q^d$ has size $q^{d-2} + o(q^{d-2})$ \cite{HDISU}, so we have that for all $h \in \mathcal H_t^d$,
\begin{align}
    L_{\mathcal D,c}(h) \leq \frac{2}{q^2}\left(1+o(1)\right)
\end{align}
so one must choose $\epsilon < \frac{2}{q^2}$ for meaningful results; choosing $\delta = \epsilon < \frac{1}{q^2}$ and referring to Theorem \ref{thm:FTML} yields that we must consider random samples of size at most $Cq^2\log\left( q^2 \right)$, for some constant $C > 0$. Furthermore, since $d-1$ points determine a $(d-2)$-dimensional sphere (i.e. the intersection of two spheres in $\F_q^d$), for large $q$, we only need $\epsilon$ slightly less than $\frac{2}{q^2}$ to get $L_{\mathcal D,c}(h)=0$.

\section{Preliminaries}\label{Prelim}
The authors of \cite{FIMW} noted that the problem they were studying was most productively thought of in the context of point configurations. Recall that their classifiers were the functions $\{h_y: y\in E\}$ where $h_y(x) = 1$ if $||y-x||=t$ and $h_y(x)$ otherwise. Thus shattering a set of size $n$ means finding sets $A,B\subset E$ such that $|A|=n$ and for each $S\subset A$ we can find a $b_S \in B$ such that for each $a\in A$ we have $||a-b_S||=t$ if and only if $a\in S$. These points taken together form a point configuration which can be thought as a subgraph of the distance graph $\mathcal{G}_t(E)$. They leverage the estimate on the number of edges in $\mathcal{G}_t(E)$ from \cite{IR}, along with an argument pigeonholing on the directions of such edges, to construct the desired configuration. These theorems have a geometric flavor to them: the similarity of $||\cdot||$ to the standard norm on Euclidean space means many familiar results concerning the geometry of $\R^d$ carry over to $\F_q^d$.  In particular, the spheres defined by our notion of distance have similar intersection properties to spheres in $\mathbb{R}^n$.

\begin{defn}
    Let $S_t = \{ x \in \mathbb{F}_q^d : ||x|| = t \}$ For notational convenience, we often identify a set with its indicator function, so that $S_t(x)=1$ precisely when $||x||=t$.  
\end{defn}

Since our work concerns a variant of the problem in \cite{FIMW}, we will follow a similar approach. First note that to shatter $n$ points it is necessary to find points $\{x^1,...,x^n\}$ and points $\{y,z\}$ such that $||x^i-y|| = ||x^i - z|| = t$ for $1\leq i \leq n$\footnote{We use the notation $x^i$ instead of $x_i$ following the convention in \cite{FIMW}. This superscript should be read as a kind of index, not an exponent.}. This leads us to the following natural definitions.

\begin{defn}[Prism]
The $(n+2)$-tuple $P = (y,z,x^1, \ldots, x^n) \in (\mathbb{F}_q^d)^{n+2}$ is an $n$-prism if for all $i \leq n,$ $||x^i - y|| = ||x^i - z|| = t$. The \textit{tail} of $P$, denoted $\mathcal{T}(P)$, is the set $\{y,z\}$. The \textit{center} of $P$, denoted $\mathcal{C}(P)$, is the set $\{x^1,x^2,\dots,x^n\}$. We may also write $P = (\mathcal{T}, \mathcal{C})$.
\end{defn}

Below we have an $n$-prism\footnote{Note that this distance graph is isomorphic to the complete bipartite graph $K_{2,n}$.} $(y,z,x^1, \ldots, x^n)$, as seen in the distance graph of $\mathbb{F}_q^d$.

\begin{center}

\tikzset{every picture/.style={line width=0.75pt}} %set default line width to 0.75pt        

\begin{tikzpicture}[x=0.75pt,y=0.75pt,yscale=-1,xscale=1]
%uncomment if require: \path (0,280); %set diagram left start at 0, and has height of 280

%Straight Lines [id:da14873011841825035] 
\draw    (200,131) -- (300,20) ;
\draw [shift={(300,20)}, rotate = 312.02] [color={rgb, 255:red, 0; green, 0; blue, 0 }  ][fill={rgb, 255:red, 0; green, 0; blue, 0 }  ][line width=0.75]      (0, 0) circle [x radius= 3.35, y radius= 3.35]   ;
\draw [shift={(200,131)}, rotate = 312.02] [color={rgb, 255:red, 0; green, 0; blue, 0 }  ][fill={rgb, 255:red, 0; green, 0; blue, 0 }  ][line width=0.75]      (0, 0) circle [x radius= 3.35, y radius= 3.35]   ;
%Straight Lines [id:da14911910683896235] 
\draw    (301,239) -- (200,131) ;
\draw [shift={(200,131)}, rotate = 226.92] [color={rgb, 255:red, 0; green, 0; blue, 0 }  ][fill={rgb, 255:red, 0; green, 0; blue, 0 }  ][line width=0.75]      (0, 0) circle [x radius= 3.35, y radius= 3.35]   ;
\draw [shift={(301,239)}, rotate = 226.92] [color={rgb, 255:red, 0; green, 0; blue, 0 }  ][fill={rgb, 255:red, 0; green, 0; blue, 0 }  ][line width=0.75]      (0, 0) circle [x radius= 3.35, y radius= 3.35]   ;
%Straight Lines [id:da4380615251303277] 
\draw    (242,130) -- (300,20) ;
\draw [shift={(300,20)}, rotate = 297.8] [color={rgb, 255:red, 0; green, 0; blue, 0 }  ][fill={rgb, 255:red, 0; green, 0; blue, 0 }  ][line width=0.75]      (0, 0) circle [x radius= 3.35, y radius= 3.35]   ;
\draw [shift={(242,130)}, rotate = 297.8] [color={rgb, 255:red, 0; green, 0; blue, 0 }  ][fill={rgb, 255:red, 0; green, 0; blue, 0 }  ][line width=0.75]      (0, 0) circle [x radius= 3.35, y radius= 3.35]   ;
%Straight Lines [id:da6019874180229179] 
\draw    (283,129) -- (300,20) ;
\draw [shift={(300,20)}, rotate = 278.86] [color={rgb, 255:red, 0; green, 0; blue, 0 }  ][fill={rgb, 255:red, 0; green, 0; blue, 0 }  ][line width=0.75]      (0, 0) circle [x radius= 3.35, y radius= 3.35]   ;
\draw [shift={(283,129)}, rotate = 278.86] [color={rgb, 255:red, 0; green, 0; blue, 0 }  ][fill={rgb, 255:red, 0; green, 0; blue, 0 }  ][line width=0.75]      (0, 0) circle [x radius= 3.35, y radius= 3.35]   ;
%Straight Lines [id:da16614484343959912] 
\draw    (301,239) -- (242,130) ;
\draw [shift={(242,130)}, rotate = 241.57] [color={rgb, 255:red, 0; green, 0; blue, 0 }  ][fill={rgb, 255:red, 0; green, 0; blue, 0 }  ][line width=0.75]      (0, 0) circle [x radius= 3.35, y radius= 3.35]   ;
\draw [shift={(301,239)}, rotate = 241.57] [color={rgb, 255:red, 0; green, 0; blue, 0 }  ][fill={rgb, 255:red, 0; green, 0; blue, 0 }  ][line width=0.75]      (0, 0) circle [x radius= 3.35, y radius= 3.35]   ;
%Straight Lines [id:da8927571215699579] 
\draw    (283,129) -- (301,239) ;
\draw [shift={(301,239)}, rotate = 80.71] [color={rgb, 255:red, 0; green, 0; blue, 0 }  ][fill={rgb, 255:red, 0; green, 0; blue, 0 }  ][line width=0.75]      (0, 0) circle [x radius= 3.35, y radius= 3.35]   ;
\draw [shift={(283,129)}, rotate = 80.71] [color={rgb, 255:red, 0; green, 0; blue, 0 }  ][fill={rgb, 255:red, 0; green, 0; blue, 0 }  ][line width=0.75]      (0, 0) circle [x radius= 3.35, y radius= 3.35]   ;
%Straight Lines [id:da7524715019508874] 
\draw    (368,128) -- (300,20) ;
\draw [shift={(300,20)}, rotate = 237.8] [color={rgb, 255:red, 0; green, 0; blue, 0 }  ][fill={rgb, 255:red, 0; green, 0; blue, 0 }  ][line width=0.75]      (0, 0) circle [x radius= 3.35, y radius= 3.35]   ;
\draw [shift={(368,128)}, rotate = 237.8] [color={rgb, 255:red, 0; green, 0; blue, 0 }  ][fill={rgb, 255:red, 0; green, 0; blue, 0 }  ][line width=0.75]      (0, 0) circle [x radius= 3.35, y radius= 3.35]   ;
%Straight Lines [id:da7813116145062058] 
\draw    (301,239) -- (368,128) ;
\draw [shift={(368,128)}, rotate = 301.12] [color={rgb, 255:red, 0; green, 0; blue, 0 }  ][fill={rgb, 255:red, 0; green, 0; blue, 0 }  ][line width=0.75]      (0, 0) circle [x radius= 3.35, y radius= 3.35]   ;
\draw [shift={(301,239)}, rotate = 301.12] [color={rgb, 255:red, 0; green, 0; blue, 0 }  ][fill={rgb, 255:red, 0; green, 0; blue, 0 }  ][line width=0.75]      (0, 0) circle [x radius= 3.35, y radius= 3.35]   ;

% Text Node
\draw (306,120.4) node [anchor=north west][inner sep=0.75pt]    {$\cdots $};
% Text Node
\draw (184,118.4) node [anchor=north west][inner sep=0.75pt]  [font=\footnotesize]  {$x^{1}$};
% Text Node
\draw (224,118.4) node [anchor=north west][inner sep=0.75pt]  [font=\footnotesize]  {$x^{2}$};
% Text Node
\draw (265,119.4) node [anchor=north west][inner sep=0.75pt]  [font=\footnotesize]  {$x^{3}$};
% Text Node
\draw (346,120.4) node [anchor=north west][inner sep=0.75pt]  [font=\footnotesize]  {$x^{n}$};
% Text Node
\draw (284,13.4) node [anchor=north west][inner sep=0.75pt]  [font=\footnotesize]  {$y$};
% Text Node
\draw (286,233.4) node [anchor=north west][inner sep=0.75pt]  [font=\footnotesize]  {$z$};

\end{tikzpicture}
\end{center}

\begin{defn}[Non-Degenerate Prism]
    We call an $n$-prism $P = (y,z,x^1, \ldots, x^n)$ \textit{non-degenerate} if all of its components are distinct points.
\end{defn}

Since our goal is specifically to show that the VC-dimension of $\Hh_t^d(E)$ is $d$, we are only interested in $d$-prisms and thus will henceforth use term ``prism'' to refer to a ``$d$-prism'' interchangeably.  

We also frequently find it useful to refer to all the points that are distance $t$ away from some given set $A$, for instance when looking for a classifier that can specify $A$. 
\begin{defn}[Pole]
    We say a point $y\in E$ is a \textit{pole} of the set $A\subset E$ if
    \begin{equation}
        y \in \bigcap_{x \in A} (S_t + x).
    \end{equation}
    We denote the set of poles of $A$ as $\Pole(A)$.
\end{defn}

These definitions give us a way to attack our central problem of shattering $d$ points. In particular:

\begin{obs}
If we can find a nondegenerate $d$-prism $P$ in $E$ such that for each $A \subsetneq \C(P)$ we can find a point $y(A) \in \Pole(A)$ with the property that $y(A)$ is not distance $t$ from any $c \in \mathcal{C}\sm A$ then the $VC$-dimension of $\Hh_t^d$ is $d$. Specifically, $\Hh_t^d(E)$ can shatter $\C(P)$.
\end{obs}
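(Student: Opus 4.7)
The plan is to unpack the definition of shattering. To shatter $\C(P)$ I must, for every subset $S \subseteq \C(P)$, exhibit a classifier $h_{u,v} \in \Hh_t^d(E)$ that equals $1$ precisely on $S$. By the definition of $h_{u,v}$, this reduces to producing a pair of distinct points $u, v \in E$ such that both lie at distance $t$ from every point of $S$, while for every $x^i \in \C(P) \sm S$ at least one of $u, v$ fails to be at distance $t$ from $x^i$.

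I would then split on whether $S = \C(P)$ or $S \subsetneq \C(P)$. In the first case, the prism itself supplies the classifier: taking $u = y$ and $v = z$ from $\T(P)$ works because by the definition of a $d$-prism, $||x^i - y|| = ||x^i - z|| = t$ for every $i$, and non-degeneracy of $P$ ensures $y \neq z$. In the second case, the hypothesis directly supplies $y(S) \in \Pole(S)$ that is at distance $t$ from every point of $S$ but from no point of $\C(P) \sm S$; I would take $u = y(S)$. The key point is that both $y$ and $z$ automatically lie in $\Pole(S)$, since they are distance $t$ from all of $\C(P) \supseteq S$, so at least one of them, call it $v$, is distinct from $u = y(S)$ and still lies in $\Pole(S)$. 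Verifying the classifier behaves correctly is then immediate: for $x^i \in S$ both $||x^i - u|| = t$ and $||x^i - v|| = t$ hold, while for $x^i \notin S$ the choice of $y(S)$ forces $||x^i - u|| \neq t$, so $h_{u,v}(x^i) = 0$.

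No step presents a genuine obstacle; the observation is essentially an unpacking of the definitions. The only content is the slightly clever reuse of the tail points $y, z$ as the second classifier point, which is what allows us to get by with only one hypothesized pole $y(S)$ per proper subset $S$ rather than a whole pair of poles. Combined with the upper bound $\mathrm{VC}(\Hh_t^d(E)) \leq d$ noted immediately after Theorem \ref{main thm} (since $d+1$ points determine a unique sphere), this yields VC-dimension exactly $d$, as claimed.
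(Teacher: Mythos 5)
Your proposal is correct and follows exactly the paper's argument: the paper likewise realizes each proper subset $A\subsetneq \C(P)$ via $h_{z,y(A)}$ with $z$ a tail point, and the full center via $h_{z,w}$ with $\{z,w\}=\T(P)$. Your additional check that the chosen tail point can be taken distinct from $y(A)$ (so the classifier is well-defined) is a minor point the paper leaves implicit, but the approach is the same.
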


To see why this is true, note that if $\T(P)=\{z,w\}$, then we can specify any $A\subsetneq \C(P)$ with the classifier $h_{z,y(A)}$.  Furthermore, we can specify the whole $\C(P)$ with $h_{z,w}$. We will show that such a prism exists by counting the number of $d$-prisms and then applying the Pigeonhole Principle on the number of $d$-prisms that do not have this property. We define the following.

\begin{defn}[$P$-Bad set]
    Fix a $d$-prism $P = (\mathcal T, \mathcal C)$ with center $\mathcal{C}$. A subset $A \subset \mathcal{C}$ is $P$-bad, or bad in $P$, if 
    \begin{equation}
        \bigcap_{x \in A} (S_t + x) \ \subset \ \bigcup_{y \in \mathcal{C} \setminus A} (S_t + y).
    \end{equation}
\end{defn}

We say a set is bad if it is $P$-bad for some prism. We say that a prism $P$ \textit{admits a bad set} if there is some subset $A\subset \C(P)$ that is $P$-bad. Note that our problem reduces to finding a nondegenerate prism that does not admit a bad set. As it turns out, our proof will require us to further restrict ourselves to only considering nondegenerate prisms with affinely independent centers.

\begin{defn}
We say that a prism is \textit{affinely nondegenerate} if it is nondegenerate and its center is affinely independent. We say that a prism is \textit{affinely degenerate} if it is nondegenerate but not affinely nondegenerate. 
\end{defn}

\section{Proof of Theorem \ref{main thm}}
We wish to find a nondegenerate prism that does not admit a bad set. We begin by obtaining a lower bound on the total number of nondegenerate prisms. We do this by noting that a nondegenerate prism is just a choice of $d$ distinct paths of length $2$ between two distinct points. The total number of such $2$-paths in $E$ is a special case of Theorem 1.1 in \cite{BCCHIP}:

\begin{them}\label{chain count}
    Let $E \subset \mathbb{F}_q^d$, where $d \geq 2$ and $|E| > \frac{2k}{\log 2} q^{\frac{d+1}{2}}$. Suppose that $t \neq 0$. Define
    \begin{equation}
        \Gamma_k = |\{(x^1, \ldots, x^{k+1}) \in E \times \cdots \times E : ||x^i - x^{i+1}|| = t, \ 1 \leq i \leq k \}|.
    \end{equation}
    Then,
    \begin{equation}
        \Gamma_k = \frac{|E|^{k+1}}{q^k} + \mathcal{D}_k \quad where \quad |\mathcal{D}_k| \leq \frac{2k}{\log 2} q^{\frac{d+1}{2}} \frac{|E|^k}{q^k}. 
    \end{equation}
    In particular, for $E$ satisfying the hypotheses of Theorem \ref{main thm}, 
    \begin{equation}
        \Gamma_2 \geq \frac{|E|^3}{2q^2}.
    \end{equation}
\end{them}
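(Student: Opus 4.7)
The plan is to prove Theorem \ref{chain count} via discrete Fourier analysis on $\mathbb{F}_q^d$, which is the standard tool for chain and distance problems in this setting. I would first express the chain count as
$$\Gamma_k = \sum_{x^1,\ldots,x^{k+1}} \prod_{i=1}^{k+1} 1_E(x^i) \prod_{j=1}^{k} S_t(x^j - x^{j+1})$$
and then expand each factor $S_t(x^j - x^{j+1})$ using the Fourier inversion formula. This converts the chain sum into a sum over $k$ frequency variables $\xi_1,\ldots,\xi_k$, where each of the $k+1$ vertex sums collapses to a value of $\widehat{1_E}$ at an appropriate linear combination of these frequencies (with $\widehat{1_E}(\xi_1)$ at the $x^1$-end, $\widehat{1_E}(-\xi_k)$ at the $x^{k+1}$-end, and $\widehat{1_E}(\xi_i - \xi_{i-1})$ at the internal vertices).

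Next I would isolate the main term from the all-zero frequency configuration $\xi_1 = \cdots = \xi_k = 0$. Using $\widehat{S_t}(0) = |S_t|/q^d$ together with the classical count $|S_t| = q^{d-1} + O(q^{(d-1)/2})$, the main term evaluates to $|E|^{k+1}/q^k$ up to a secondary piece that can be absorbed into $\mathcal{D}_k$. All remaining frequency configurations contribute to $\mathcal{D}_k$ and must be controlled.

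The error estimate relies on the sharp bound $|\widehat{S_t}(\xi)| \lesssim q^{-(d+1)/2}$ for $\xi \neq 0$, which is the Fourier-analytic input behind the Iosevich–Rudnev result cited in \cite{IR}. I would combine this pointwise bound with Plancherel's identity, which supplies $\sum_\xi |\widehat{1_E}(\xi)|^2 = |E|/q^d$, to absorb the internal-vertex Fourier factors. Rather than bounding all frequency configurations independently (which would give a constant growing like $C^k$), I would proceed by induction on $k$: writing $\Gamma_k(x) = \sum_y S_t(x - y) 1_E(y) \Gamma_{k-1}(y)$ as an operator iterate and applying the one-step single-sphere estimate to the inductive hypothesis, the error propagates additively. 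This is exactly what produces the linear-in-$k$ constant $\tfrac{2k}{\log 2}$ in the statement, where the base constant $\tfrac{2}{\log 2}$ comes from optimizing the single-step incidence bound.

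The main obstacle I expect is precisely this bookkeeping of the constant. A naive expansion over all $\xi$-configurations multiplies the sphere-decay factor $q^{-(d+1)/2}$ by itself up to $k$ times and multiplies the number of bad configurations, which yields the wrong (exponential) dependence on $k$. The induction must be structured so that at each step only one fresh $\widehat{S_t}$ factor is invoked at a nonzero frequency, and the remaining sphere convolutions are absorbed into the inductive main term; this is the non-routine part. Once this inductive scheme is in place, the last step $k = 2$ case specializes to $\Gamma_2 = |E|^3/q^2 + \mathcal{D}_2$ with $|\mathcal{D}_2| \leq \tfrac{4}{\log 2} q^{(d+1)/2} |E|^2/q^2$, and the hypothesis $|E| \geq C q^{(d+1)/2}$ (which is weaker than the assumptions of Theorem \ref{main thm}) then suffices to conclude $\Gamma_2 \geq |E|^3/(2q^2)$.
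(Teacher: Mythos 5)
The paper does not prove this statement at all: it is imported verbatim as a special case of Theorem 1.1 of \cite{BCCHIP}, so there is no internal proof to compare against. Measured against the actual argument in that reference, your outline is essentially the right one: the chain count is expanded via Fourier inversion, the main term $|E|^{k+1}/q^k$ comes from the zero-frequency configuration together with $|S_t| = q^{d-1} + O(q^{d/2})$, the error is controlled by the sphere decay $|\widehat{S_t}(\xi)| \lesssim q^{-(d+1)/2}$ for $\xi \neq 0$ combined with Plancherel, and --- the key structural point you correctly identify --- the argument must be run as an induction on $k$ (viewing $\Gamma_k$ as an iterate of the operator $f \mapsto 1_E \cdot (S_t * f)$) so that only one fresh nonzero-frequency sphere factor is paid for per step, which is what keeps the constant linear in $k$ rather than exponential. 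Two caveats: first, your proposal is a plan rather than a proof, and the one genuinely delicate step (showing that the inductive error propagates additively, or more precisely that the multiplicative accumulation $(1+\epsilon)^k$ stays bounded by $2$, which is where the $\log 2$ in the constant actually originates --- not from ``optimizing the single-step incidence bound'' as you suggest) is asserted rather than carried out. Second, the final deduction $\Gamma_2 \geq |E|^3/(2q^2)$ requires $|E| \geq \frac{8}{\log 2} q^{(d+1)/2}$ so that $|\mathcal{D}_2|$ is at most half the main term; this is indeed implied by the hypotheses of Theorem \ref{main thm} for every $d \geq 2$, as you claim, but it is worth noting that the bare hypothesis $|E| > \frac{4}{\log 2} q^{(d+1)/2}$ stated in the theorem only makes $|\mathcal{D}_2|$ smaller than the full main term, so the factor of $\frac12$ genuinely uses the stronger assumption.
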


This allows us to obtain the following theorem.

\begin{thm}\label{prism count}
    Let $E \subset \mathbb{F}_q^d$, $d \geq 3$. Let $N_d (E)$ be the number of non-degenerate $d$-prisms in $E$. If $|E| > \frac{4}{\log(2)} q^{\frac{d+1}{2}}$, then\footnote{We use the notation $A\gtrsim B$ to indicate that for some constant $c$, $A\geq cB$. We use $\gtrsim_d$ to indicate that the constant $c$ may depend on $d$. Throughout this paper, we assume that $d\ll q$ --- that is, $d$ is treated as a constant.}
    \begin{align}
        N_d (E) \ \gtrsim_d \ \frac{|E|^{d+2}}{q^{2d}}.
    \end{align}
\end{thm}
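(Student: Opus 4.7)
The plan is to count non-degenerate $d$-prisms by conditioning on their tail and converting a lower bound on the mean number of common neighbors into a lower bound on the $d$-th moment via convexity. For an ordered pair $(y,z) \in E \times E$ with $y \neq z$, set
\[
\lambda(y,z) \;:=\; \bigl|\{x \in E : \|x-y\| = \|x-z\| = t\}\bigr|.
\]
Since $t \neq 0$, every $x$ counted by $\lambda(y,z)$ is automatically distinct from $y$ and $z$, so the number of ordered non-degenerate $d$-prisms with tail $(y,z)$ is the falling factorial $\lambda(y,z)(\lambda(y,z)-1)\cdots(\lambda(y,z)-d+1)$. Summing over tails yields
\[
N_d(E) \;=\; \sum_{\substack{y,z \in E \\ y \neq z}} \lambda(y,z)\bigl(\lambda(y,z)-1\bigr)\cdots\bigl(\lambda(y,z)-d+1\bigr).
\]

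First I would estimate the first moment. Observe that $\sum_{y,z \in E}\lambda(y,z) = \Gamma_2$ and the diagonal contribution $\sum_{y}\lambda(y,y)$ equals $\Gamma_1$, in the notation of Theorem \ref{chain count}. That theorem, applied with $k=2$ (enlarging the hypothesis constant by a bounded factor if needed, which is harmless), gives $\Gamma_2 \gtrsim |E|^3/q^2$; applied with $k=1$ it gives $\Gamma_1 \lesssim |E|^2/q$. Since $d \geq 3$ forces $|E| > q^{(d+1)/2} \geq q^2 \gg q$, we have $\Gamma_1 \ll |E|^3/q^2$, so
\[
\sum_{y \neq z}\lambda(y,z) \;=\; \Gamma_2 - \Gamma_1 \;\gtrsim\; \frac{|E|^3}{q^2}.
\]

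The key analytic step is a convexity argument. Because $x \mapsto x^d$ is convex on $[0,\infty)$, Jensen's inequality (equivalently, the power-mean inequality) applied over the $N = |E|(|E|-1) \asymp |E|^2$ ordered distinct pairs gives
\[
\sum_{y \neq z} \lambda(y,z)^d \;\geq\; N^{1-d}\Bigl(\sum_{y \neq z}\lambda(y,z)\Bigr)^{d} \;\gtrsim_d\; \frac{|E|^{d+2}}{q^{2d}}.
\]
Next I would convert $\lambda^d$ back into the falling factorial by restricting to pairs with $\lambda(y,z) \geq 2d$: each of the $d$ factors $\lambda - i$ (with $0 \leq i < d$) is then at least $\lambda/2$, so their product is $\geq \lambda^d/2^d$. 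The removed pairs contribute at most $(2d)^d|E|^2$ to $\sum_{y\neq z}\lambda(y,z)^d$, giving
\[
N_d(E) \;\geq\; 2^{-d}\!\!\sum_{\lambda(y,z) \geq 2d}\!\!\lambda(y,z)^d \;\gtrsim_d\; \frac{|E|^{d+2}}{q^{2d}} \;-\; C_d|E|^2.
\]

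The main obstacle is absorbing this $|E|^2$ error term, and this is precisely where the hypothesis $d \geq 3$ is used. The first term dominates the second exactly when $|E|^d \gtrsim_d q^{2d}$, i.e., $|E| \gtrsim q^2$, which the hypothesis $|E| > \tfrac{4}{\log 2}q^{(d+1)/2}$ supplies as soon as $(d+1)/2 \geq 2$, i.e., $d \geq 3$ (inflating the hypothesis constant as needed for $d = 3$, which is absorbed into $\gtrsim_d$). The conclusion $N_d(E) \gtrsim_d |E|^{d+2}/q^{2d}$ then follows. For $d = 2$ the parallel argument fails at exactly this final step, consistent with the $d = 2$ case of Theorem \ref{main thm} requiring a genuinely different approach elsewhere in the paper.
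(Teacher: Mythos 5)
Your proposal is correct and follows essentially the same route as the paper: both express $N_d(E)$ as a sum over tails of falling factorials of common-neighbor counts, extract the first moment from Theorem \ref{chain count} (subtracting the diagonal $\Gamma_1$ term), and pass to the $d$-th moment by a power-mean/H\"older convexity step. The only cosmetic difference is in converting the falling factorial to a $d$-th power --- you truncate to pairs with $\lambda \geq 2d$ and absorb an error of size $O_d(|E|^2)$ at the level of the $d$-th moment, while the paper shifts to $k' = \max(k-d+1,0)$ and absorbs the analogous error at the level of the first moment; both are valid and rely on the same hypothesis $|E| \gtrsim_d q^2$.
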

\begin{proof}
Let $k_{(x,y)}$ be the number of paths of length 2 from $x$ to $y$ in the distance graph of $E$. Then,

\begin{align}\label{np3} N_d (E) \ = \ \sum_{\substack{x,y \in E \\ x \neq y}} k_{(x,y)}(k_{(x,y)}-1)\cdots(k_{(x,y)}-d+1).
\end{align}  
For each $(x,y)\in E^2$, define \begin{equation}
    k'_{(x,y)}= \max(k_{(x,y)}-d+1, 0).
\end{equation}
Note that Equation \ref{np3} implies that
\begin{align}\label{np3 part 2} N_d (E) \ \geq \ \sum_{\substack{x,y \in E \\ x \neq y}} (k_{(x,y)}')^d.  
\end{align}  

Theorem \ref{chain count} gives us a lower bound on the total number paths of length $2$ in the distance graph of $E$ when $|E| > \frac{4}{\log(2)} q^{\frac{d+1}{2}}$: \begin{align} \sum_{x,y \in E} k_{(x,y)} \ \gtrsim \ |E|^3q^{-2}.
\end{align}
Now note that the number of 2-paths where the endpoints are the same is just twice the number of 1-paths. By Theorem \ref{chain count}, $\Gamma_1 \lesssim |E|^2q^{-1}$. Thus
\begin{align}\label{c2} \sum_{\substack{x,y \in E \\ x \neq y}} k_{(x,y)} \ \gtrsim \ |E|^3q^{-2} .
\end{align}
Then,
\begin{align}\label{c2.2}
\sum_{\substack{x,y \in E \\ x \neq y}} k_{(x,y)}' \ \geq \ \sum_{\substack{x,y \in E \\ x \neq y}} (k_{(x,y)}-d+1) & \ \geq \ |E|^3q^{-2} - (d-1)|E|^2 \ \gtrsim_d \ |E|^3q^{-2},
\end{align} 
where we have used $|E|^3q^{-2}\gg (d-1)|E|^2$ to bound $(d-1)|E|^2$ by a small constant times $|E|^3q^{-2}$. H\"older's inequality states that for nonnegative $a_i$, $b_i$ and positive $r,s$,
\[\left(\sum_{i=1}^n a_i^rb_i^s\right)^{r+s} \ \leq \ \left(\sum_{i=1}^{n}a_i^{r+s}\right)^r\left(\sum_{i=1}^{n}b_i^{r+s}\right)^s.\]
Setting $n=|E|^2$, $a_i=k_{(x,y)}'$ (where we arbitrarily index the pairs $(x,y)$), $b_i=1$, $r=1$, and $s=d-1$, we get that 
\[\left(\sum_{\substack{x,y \in E \\ x \neq y}} k_{(x,y)}'\right)^d \ \leq \ \left(\sum_{\substack{x,y \in E \\ x \neq y}}(k_{(x,y)}')^d\right)\cdot (|E|^2)^{d-1},\] or (by Equation \ref{c2.2})
\[\left(\sum_{\substack{x,y \in E \\ x \neq y}}(k_{(x,y)}')^d\right) \ \gtrsim_d \  \left(\frac{
|E|^3}{q^2}\right)^d \cdot  \frac{1}{|E|^{2d-2}} = \frac{|E|^{d+2}}{q^{2d}}.\]
By Equation \ref{np3 part 2}, the proof is complete.
\end{proof}

Our goal now is to show that a positive proportion of these prisms are affinely nondegenerate. We require two intermediary results. The first estimates the number of points on a sphere in $\F_q^d$. See for example the appendix of \cite{BHIPR17} for a treatment of a theorem proved by Minkowski \cite{Min11} at the age of 17.  The following is a special case.

\begin{them}\label{size of a sphere}
Let the sphere $S_t \subset \F_q^d$ be as defined above. Then 
\begin{equation}
    q^{d-1} - q^{\frac{d}{2}} < |S_t^d| < q^{d-1} + q^{\frac{d}{2}}.
\end{equation}
\end{them}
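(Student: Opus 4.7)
The plan is to count $|S_t|$ by expressing the indicator of the equation $||x|| = t$ via orthogonality of additive characters, reducing the count to a Gauss sum estimate. Let $\chi$ be a fixed nontrivial additive character of $\F_q$. Then the orthogonality identity $\frac{1}{q}\sum_{s \in \F_q} \chi(s \cdot a) = \mathbf{1}_{a=0}$ gives
\begin{equation*}
|S_t| \;=\; \sum_{x \in \F_q^d} \mathbf{1}_{||x|| - t = 0} \;=\; \frac{1}{q} \sum_{s \in \F_q} \chi(-st) \prod_{i=1}^{d} \sum_{x_i \in \F_q} \chi(s x_i^2).
\end{equation*}

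Next I would isolate the $s=0$ term, which contributes $q^{d-1}$, and treat this as the main term. For $s \neq 0$, the inner sum $\sum_{x_i \in \F_q} \chi(s x_i^2)$ is a classical quadratic Gauss sum; after a change of variables one can write it as $\eta(s) \, g(\chi)$, where $\eta$ is the quadratic multiplicative character on $\F_q^\times$ and $g(\chi) = \sum_{x \in \F_q} \chi(x^2)$ satisfies $|g(\chi)| = \sqrt{q}$. Hence the contribution of $s \neq 0$ becomes
\begin{equation*}
\frac{g(\chi)^d}{q} \sum_{s \neq 0} \chi(-st)\, \eta(s)^d.
\end{equation*}

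The remaining step is to bound this error term, splitting into cases based on the parity of $d$. If $d$ is even, then $\eta^d$ is trivial, and $\sum_{s \neq 0} \chi(-st) = -1$ since $t \neq 0$; combined with $|g(\chi)^d/q| = q^{d/2 - 1}$, this gives an error of size $q^{d/2-1}$, which is well within $q^{d/2}$. If $d$ is odd, the sum $\sum_{s \neq 0} \chi(-st)\eta(s)$ is again a Gauss sum (up to the sign $\eta(-t)$), of modulus $\sqrt{q}$, so the error is of size $q^{d/2 - 1/2}$, again comfortably below $q^{d/2}$. In either case the stated two-sided bound $q^{d-1} - q^{d/2} < |S_t| < q^{d-1} + q^{d/2}$ follows.

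The only real obstacle is the Gauss sum bookkeeping; this is not deep, but one must be careful about the parity of $d$ and about the factor $\eta(-t)$ to verify that the error term truly has modulus at most $q^{d/2}$ in every case (and not just $O(q^{d/2})$ with some implicit constant). Since the paper is content to state the bound with the generous constant $1$ on the error, I would simply do both parities explicitly, invoking $|g(\chi)| = \sqrt{q}$ as a black box and citing \cite{BHIPR17} (or the reference to Minkowski) for the underlying Gauss sum evaluation.
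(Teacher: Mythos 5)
The paper does not actually prove this statement; it is quoted as a special case of a result of Minkowski, with the reader pointed to the appendix of \cite{BHIPR17}. Your character-sum argument is the standard proof of exactly this estimate (and is essentially what that appendix does), and it is correct: the $s=0$ term gives the main term $q^{d-1}$, the identity $\sum_{x}\chi(sx^2)=\eta(s)g(\chi)$ with $|g(\chi)|=\sqrt q$ is right, and your case analysis gives error terms of modulus exactly $q^{d/2-1}$ ($d$ even) and $q^{(d-1)/2}$ ($d$ odd), both strictly below $q^{d/2}$, so the strict two-sided bound follows. Two small points worth making explicit: the hypothesis $t\neq 0$ is genuinely needed (it is what makes $\sum_{s\neq 0}\chi(-st)=-1$ in the even case; for $t=0$ that sum is $q-1$ and the error term is of order $q^{d/2}$, consistent with the fact that the zero sphere can be exceptional), and in the odd case the substitution $u=-st$ turns $\sum_{s\neq0}\chi(-st)\eta(s)$ into $\eta(-t)g(\chi)$, so the error is $\eta(-t)g(\chi)^{d+1}/q=\pm q^{(d-1)/2}$ exactly; with $g(\chi)^2=\eta(-1)q$ one even recovers the exact formulas for $|S_t|$, of which the stated inequalities are an immediate consequence.
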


Note that this result says a $d$-sphere contains approximately (and asymptotically) $q^{d-1}$ points.

We will also need the following lemma:
\begin{lem}\label{Brian Lemma}
    Let $A$ be a $n$-dimensional affine subspace of $\F_q^d$. Then $|A\cap S_t| \leq 2q^{n-1}$.
\end{lem}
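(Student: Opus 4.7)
The plan is to parametrize $A$ and realize $A \cap S_t$ as the zero set of a polynomial of degree at most $2$ in $n$ variables, then invoke a Schwartz--Zippel type bound. Write $A = v_0 + V$ where $V$ is the $n$-dimensional linear subspace spanned by linearly independent $v_1, \ldots, v_n \in \F_q^d$, so that every point of $A$ has a unique expression $v_0 + \sum_i s_i v_i$ with $(s_1, \ldots, s_n) \in \F_q^n$. Expanding
\[
||v_0 + \sum_i s_i v_i|| \ = \ ||v_0|| + 2\sum_i s_i (v_0 \cdot v_i) + \sum_{i,j} s_i s_j (v_i \cdot v_j)
\]
shows that imposing $||x|| = t$ pulls back to an equation $Q(s_1, \ldots, s_n) = 0$ for some polynomial $Q$ of total degree at most $2$ in the $s_i$.

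The key step is then to apply the Schwartz--Zippel bound: any nonzero polynomial of degree $k$ in $n$ variables over $\F_q$ has at most $k q^{n-1}$ roots in $\F_q^n$. Applied with $k=2$ to $Q$, this yields exactly the desired bound $|A \cap S_t| \leq 2q^{n-1}$, since the parametrization $(s_1, \ldots, s_n) \mapsto v_0 + \sum_i s_i v_i$ is a bijection from $\F_q^n$ onto $A$.

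The one obstacle is the degenerate case in which $Q$ is identically zero, which would force $A \subseteq S_t$ and $|A \cap S_t| = q^n$. Reading off the coefficients of $Q$, this happens exactly when $v_i \cdot v_j = 0$ for all $i,j$, $v_0 \cdot v_i = 0$ for all $i$, and $||v_0|| = t$; geometrically, the direction space of $A$ must be a totally isotropic subspace orthogonal to $v_0$. I expect the lemma to be invoked only for affine subspaces $A$ not contained in $S_t$ (or for the totally-isotropic configurations to be absorbed into lower-order terms in the downstream application), so that the Schwartz--Zippel argument above dispatches every case of interest.
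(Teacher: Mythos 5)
Your approach is essentially the paper's: parametrize $A$ by $\F_q^n$ via a basepoint and a basis of the direction space, observe that the condition $||x||=t$ pulls back to a polynomial $Q$ of total degree at most $2$ in the parameters, and bound the zero set of $Q$. The only difference is cosmetic: where you invoke the Schwartz--Zippel bound ($\leq kq^{n-1}$ zeros for a nonzero degree-$k$ polynomial), the paper proves that bound inline for $k=2$ by fixing $c_1,\dots,c_{n-1}$ and noting that what remains is a quadratic equation in $c_n$ with at most two roots.

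The degenerate case you flag is real, and you should know that the paper's proof does not handle it either: the claim ``this is quadratic in $c_n$ and has at most two solutions'' silently assumes the equation $c_n^2\alpha + c_n\beta + \gamma = t$ is not the trivial identity, which fails exactly when the direction space $V$ is totally isotropic, orthogonal to the basepoint, and the basepoint has norm $t$ --- i.e.\ exactly your $Q\equiv 0$ case. Such configurations exist: in $\F_q^3$ with $i^2=-1$ and $t=u^2$, the line $\{(c,ic,u): c\in\F_q\}$ lies entirely in $S_t$, so a $1$-dimensional affine subspace can meet $S_t$ in $q>2q^{0}$ points, and the lemma as literally stated is false. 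The clean fix is to add the hypothesis $A\not\subseteq S_t$, under which your argument is complete (a polynomial of total degree $2<q$ vanishing on all of $\F_q^n$ is the zero polynomial, so $Q\neq 0$ and Schwartz--Zippel applies); but your closing hope that the exceptional subspaces can be safely deferred to the downstream applications is not something the paper verifies, so it should be checked rather than assumed.
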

\begin{proof}
We can write each element $a$ of $A$ as $b + w$ where $b\in A$ is a fixed basepoint, and $w \in V$, a $n$-dimensional linear subspace. Choose a basis $v_1,...,v_n$ for $V$, then any $a\in A$ can be written $b + c_1v_1 + c_2v_2 + ... + c_nv_n$. We show that once $c_1, ..., c_{n-1}$ have been fixed there are at most two choices for $c_n$ such that $||a||=t$. Since there are $q^{n-1}$ choices for $(c_1,...,c_{n-1})$ it follows that $|A\cap S_t| < 2q^{n-1}$.

For notational convenience let $v_0 = b$ and $c_0 = 1$. Furthermore for vectors $x,y \in \F_q^d$ let $x\cdot y$ denote $x_1y_1 + x_2y_2 ... +x_dy_d$, the bilinear form inducing our ``norm'' $||\cdot||$. Then we have
\begin{align*}
    ||a|| & = ||\sum_{i=0}^{n} c_iv_i|| = \sum_{j=1}^d \left(\sum_{i=0}^n c_iv_{ji}\right)^2 = \sum_{j=1}^d \ \sum_{0\leq i, k \leq n} c_i c_k v_{ji} v_{jk} = \sum_{0\leq k, l \leq n} \ \sum_{j=1}^d c_i c_k v_{ji} v_{jk}  \\
    & = \sum_{0\leq i,k \leq n} c_ic_k(v_i \cdot v_k) = c_n^2||v_n|| + c_n \sum_{i=0}^{n-1} c_i (v_n \cdot v_i) + \sum_{0\leq i,k \leq n-1} c_i c_k (v_i \cdot v_k).
\end{align*}
We want $||a|| = t$, so once $c_1,...,c_{n-1}$ have been fixed this is an equation of the form 
$$c_n^2 \alpha + c_n \beta + \gamma = t$$ 
for constants $\alpha, \beta, \gamma$. This is quadratic in $c_n$ and has at most two solutions. 
\end{proof}

We now show that, under stronger assumptions on the size of $E$, a positive proportion of nondegenerate prisms are affinely nondegenerate. Here and onward $C_d$ denotes a value that is constant with respect to $q$ but not $d$. Since we are assuming $q\gg d$ such values are essentially constant.  

\begin{lem}\label{almost all prisms are AN}
    Let $N'_d(E)$ be the number of affinely nondegenerate prisms in $E\subseteq \mathbb{F}_q^d$, $d\geq 3$, and assume that $d=3$ or $|E|  \geq C_d q^{d - \frac{1}{d-1}}$. Then 
    \begin{equation}
        \frac{N_d(E) - N'_d(E)}{N_d(E)} \leq C'_d, \quad with \quad C'_d < 1 .
    \end{equation}
    That is, an asymptotically positive proportion of nondegenerate prisms are affinely nondegenerate. In particular, this means by Theorem \ref{prism count},
     \begin{equation}
        N'_d \geq C_d \frac{|E|^{d+2}}{q^{2d}}.
    \end{equation}

\end{lem}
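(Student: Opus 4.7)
The plan is to fix the tail $(y,z)$ with $y \neq z$ and bound the number of affinely dependent ordered $d$-tuples of distinct points in $T_{y,z} := (S_t+y) \cap (S_t+z) \cap E$, then sum over tails. Note that $T_{y,z}$ is contained in the perpendicular bisector hyperplane of $y$ and $z$, so Lemma \ref{Brian Lemma} applied after shifting by $-y$ yields $|T_{y,z}| \leq 2q^{d-2}$. The case $d=3$ is immediate: Lemma \ref{Brian Lemma} with $n=1$ gives that a line meets $S_t$ in at most two points, so three distinct points in $T_{y,z}$ cannot be collinear and are therefore affinely independent. Hence $N_3'(E) = N_3(E)$ with no additional size hypothesis.

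For $d \geq 4$ (the argument also applies vacuously to $d=3$), I would stratify each affinely dependent distinct $d$-tuple $(x^1, \dots, x^d) \in T_{y,z}^d$ by the smallest $i$ with $\{x^1, \dots, x^i\}$ affinely dependent. The same line-sphere bound forces $i \geq 4$. For fixed $i$, the affine span of $\{x^1, \dots, x^{i-1}\}$ has dimension $i-2$, and Lemma \ref{Brian Lemma} gives at most $2q^{i-3}$ options for $x^i$ on this span and on $S_t + y$. The remaining $d - i$ entries live freely in $T_{y,z}$, so summing $i$ from $4$ to $d$,
\begin{align*}
    \#\{\text{affinely dependent distinct tuples in } T_{y,z}^d\} \ \leq \ \sum_{i=4}^{d} |T_{y,z}|^{i-1} \cdot 2q^{i-3} \cdot |T_{y,z}|^{d-i} \ \lesssim_d \ q^{d-3}\, |T_{y,z}|^{d-1}.
\end{align*}

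Summing over $y \neq z$, using $|T_{y,z}| \leq 2q^{d-2}$ together with the $2$-chain count $\sum_{y,z} |T_{y,z}| = \Gamma_2 \lesssim |E|^3/q^2$ from Theorem \ref{chain count},
\begin{align*}
    N_d(E) - N_d'(E) \ \lesssim_d \ q^{d-3} \sum_{y \neq z} |T_{y,z}|^{d-1} \ \leq \ q^{d-3}(2q^{d-2})^{d-2}\sum_{y \neq z} |T_{y,z}| \ \lesssim_d \ q^{d^2-3d-1}|E|^3.
\end{align*}
Dividing by the lower bound $N_d(E) \gtrsim_d |E|^{d+2}/q^{2d}$ from Theorem \ref{prism count} gives the ratio $\lesssim_d q^{d^2-d-1}/|E|^{d-1}$. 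Since $(d^2 - d - 1)/(d-1) = d - 1/(d-1)$, the hypothesis $|E| \geq C_d q^{d - 1/(d-1)}$ forces the ratio to be a constant which can be pushed below $1$ by choosing $C_d$ large enough. The ``in particular'' conclusion is immediate from Theorem \ref{prism count}.

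The main obstacle is selecting a stratification that plays well with the sphere geometry. The ``smallest index of dependence'' decomposition succeeds precisely because three distinct points on $S_t$ are automatically non-collinear (so the first dependence appears at index $i \geq 4$), combined with the dimension-graded bound $|A \cap S_t| \leq 2q^{\dim A - 1}$ from Lemma \ref{Brian Lemma}. The exponent $d - 1/(d-1)$ emerges from trading the pointwise cap bound $|T_{y,z}| \leq 2q^{d-2}$ against the first-moment count of $\Gamma_2$; I do not see a straightforward Hölder or Cauchy--Schwarz manipulation that improves on this threshold without extra input.
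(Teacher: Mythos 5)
Your proof is correct and follows essentially the same route as the paper: the $d=3$ case via the line--sphere bound, and for $d\geq 4$ a bound of the form $q^{d-3}(2q^{d-2})^{d-2}\sum_{y\neq z}k_{(y,z)}\lesssim_d q^{d^2-3d-1}|E|^3$ on affinely degenerate prisms, divided by the Theorem \ref{prism count} lower bound. Your stratification by the smallest index of affine dependence is in fact a slightly more careful justification of the paper's step ``if the center is affinely dependent, then $x^d$ lies in the affine span of $x^1,\dots,x^{d-1}$,'' which as literally stated needs a reordering (or your minimal-index) argument to be airtight.
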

\begin{proof}
    First note that if $d=3$ all nondegenerate prisms are affinely nondegenerate. For a prism to be affinely degenerate in $d=3$ its center would have to lie on a 1-dimensional affine subspace $A$. But then by Lemma \ref{Brian Lemma} if $y$ is a pole of this prism then we have $|A\cap (S_t + y)| \leq 2$. But a nondegenerate prism in $d=3$ must have 3 distinct center points. 

    Now consider the $d>3$ case. Define $k_{(x,y)}$ as in Theorem \ref{prism count}.  In counting affinely nondegenerate prisms, it suffices to find an upper bound for the count of nondegenerate prisms with affinely dependent center.  In such a prism $P=(y,z,x^1,...,x^d)$, fixing the pair $(y,z)$, the number of choices for $(x^1,...,x^{d-1})$ is at most $k_{(y,z)}^{d-1}$, since each $x^i$ must be chosen to be distance $t$ from both $y$ and $z$.  Having chosen $(y,z,x^1,...,x^{d-1})$, if the center $\{x^1,...,x^{d}\}$ is affinely dependent, then $x^d$ must be on the affine subspace 
    $A_0$ of $\mathbb{F}_q^d$ generated by $\{x^1,...,x^{d-1}\}$, which has dimension
    $$
    \text{rank}\left(\begin{array}{cc}
    x^2-x^1 \\
    x^3 - x^1 \\
    \vdots \\
    x^{d-1}-x^1
    \end{array}\right)\leq d-2.  
    $$
    Therefore, by Lemma \ref{Brian Lemma}, $|A_0\cap (y+S_t)|\leq 2q^{d-3}$.  Now we can bound our total count of nondegenerate prisms with affinely dependent centers, using the above calculations and the fact that $k_{(y,z)}\leq 2q^{d-2}$ for any pair $(y,z)$ (which is again by Lemma \ref{Brian Lemma}):
    $$
    N_d(E)-N'_d(E)\leq 2q^{d-3}\sum_{y,z\in E}{\left(k_{(y,z)}\right)^{d-1}}
    \leq 2q^{d-3}\left(2q^{d-2}\right)^{d-2}\sum_{y,z\in E}{k_{(y,z)}}
    $$
    $$
    \lesssim_d q^{d^2-3d+1} \frac{|E|^3}{q^2} =q^{d^2-3d-1}|E|^3.
    $$
    By Theorem \ref{prism count}, $N_d(E)\gtrsim \frac{|E|^{d+2}}{q^{2d}}$.  Therefore, if $|E|\geq C_d q^{d-\frac{1}{d-1}}\log{q}$, then
    $$
    \frac{N_d(E) - N'_d(E)}{N_d(E)}
    \leq C''_d \frac{q^{d^2-3d-1}|E|^3}{\frac{|E|^{d+2}}{q^{2d}}}
    =C_d'' \frac{q^{d^2-d-1}}{|E|^{d-1}}
    \leq C_d' < 1.$$

\end{proof}
We now turn our attention to showing one of these affinely nondegenerate prisms does not admit a bad set. We proceed as follows.

\begin{lem}\label{intersect spheres}
    Suppose that the set of distinct points $\{a_i\}_{i=1}^k$ are affinely independent. That is, the set $\{a_1 - a_j: 2\leq j\leq k\}$ is a linearly independent set of vectors. Then
    \begin{equation}
        \left| \bigcap_{i=1}^k (S_t + a_i) \right| \leq 2q^{d-k}.
    \end{equation}
\end{lem}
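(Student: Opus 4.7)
The plan is to linearize the system of sphere-equations by subtracting pairs, use affine independence of $\{a_i\}$ to control the dimension of the resulting solution space, and then invoke Lemma \ref{Brian Lemma} to handle the remaining single quadratic constraint.

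In detail, a point $y \in \F_q^d$ lies in $\bigcap_{i=1}^k (S_t + a_i)$ precisely when $\|y - a_i\| = t$ for every $i$. Expanding via the bilinear form, this becomes $\|y\| - 2 y \cdot a_i + \|a_i\| = t$ for each $i$. Subtracting the $i=1$ equation from each of the others eliminates the quadratic term in $y$ and yields the $k-1$ linear equations
\begin{equation*}
    2 \, y \cdot (a_i - a_1) \ = \ \|a_i\| - \|a_1\|, \qquad i = 2, \ldots, k.
\end{equation*}
Because $\{a_i\}_{i=1}^k$ is affinely independent, the vectors $\{a_i - a_1\}_{i=2}^k$ are linearly independent, so this linear system has rank exactly $k-1$ (the odd characteristic hypothesis ensures dividing by $2$ is harmless). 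Its solution set is therefore an affine subspace $A \subseteq \F_q^d$ of dimension $d - (k-1) = d - k + 1$.

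All $y$ satisfying the full system must lie in $A$ and additionally satisfy $\|y - a_1\| = t$, i.e.\ $y \in A \cap (S_t + a_1)$. Translating by $-a_1$, this has the same cardinality as $(A - a_1) \cap S_t$, where $A - a_1$ is again an affine subspace of dimension $d-k+1$. Applying Lemma \ref{Brian Lemma} with $n = d-k+1$ gives
\begin{equation*}
    \left| (A - a_1) \cap S_t \right| \ \leq \ 2 q^{(d-k+1) - 1} \ = \ 2 q^{d-k},
\end{equation*}
which is the desired bound.

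I don't expect any real obstacle: the key observation is simply that differences of the sphere equations are linear, so affine independence of the centers converts directly into linear independence of the coefficient vectors, collapsing a nominally quadratic system into a single quadratic constraint on a low-dimensional affine subspace. The only point that requires brief care is that Lemma \ref{Brian Lemma} is stated for $S_t$ rather than a translate, which is fixed by the translation step above.
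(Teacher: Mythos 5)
Your proof is correct and follows essentially the same route as the paper: linearize by subtracting the sphere equations, use affine independence of the centers to see the resulting linear system has rank $k-1$ and hence a $(d-k+1)$-dimensional affine solution space, then apply Lemma \ref{Brian Lemma} to its intersection with a single sphere. The only cosmetic difference is that the paper builds the translation by $a_1$ into its parametrization from the start, whereas you translate at the end; both are fine.
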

\begin{proof}
Note that the set $\bigcap_{i=1}^k (S_t + a_i)$ corresponds to the set of vectors $x$ with $||x||=t$ such that $||a_1 + x - a_j|| = t$ for $2\leq j \leq k$. Let $a'_j = a_j - a_1$ and write $a_j' =  (\alpha_{j,1},..,\alpha_{j,d})$. Then any such $x=(x_1,...,x_d)$ satisfies the system of equations
\begin{align*}
    (x_1 - \alpha_{2,1})^2 + (x_2 - \alpha_{2,2})^2 & + ... (x_d - \alpha_{2,d})^2 = t \\
    (x_1 - \alpha_{3,1})^2 + (x_2 - \alpha_{3,2})^2 & + ... (x_d - \alpha_{3,d})^2 = t \\
    & \vdots \\
    (x_1 - \alpha_{k,1})^2 + (x_2 - \alpha_{k,2})^2 & + ... (x_d - \alpha_{k,d})^2 = t.
\end{align*}
Expanding and noting that $||x||=t$ we obtain the linear system of equations
\begin{align*}
    2x_1\alpha_{2,1} + 2x_2\alpha_{2,2} & + ... + 2x_d\alpha_{2,d} = ||a'_2|| \\
    2x_1\alpha_{3,1} + 2x_2\alpha_{3,2} & + ... + 2x_d\alpha_{3,d} = ||a'_3|| \\
    & \vdots \\
    2x_1\alpha_{k,1} + 2x_2\alpha_{k,2} & + ... + 2x_d\alpha_{k,d} = ||a'_3||.
\end{align*}
Since we assumed the $a_j$s were affinely independent, this system's corresponding matrix has full rank. Thus its solution space $A$ is an affine subspace of dimension $d-k+1$. However we are only interested in those $x \in A$ with $||x|| = t$. This corresponds to the intersection $A\cap S_t$ which by Lemma \ref{Brian Lemma} has cardinality $< 2q^{d-k}$, completing the proof. 
\end{proof}

To complete the proof we will bound the number of prisms that admit a bad set by counting the number of prisms a given set of size $k$ can be bad in. 

\begin{lem}\label{poles size q^a} Suppose that $B$ is a bad set, with $|\Pole(B)| > 2 q^{a-1}$. For every $y, z \in \Pole(B),$ there exists a subset $J \subset \Pole(B)$ such that $J \cup \{y,z\}$ are affinely independent and $|J| = a$. 
\end{lem}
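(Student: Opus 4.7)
The plan is to build $J$ greedily, adding one pole at a time and at each step choosing a point that strictly increases the affine dimension of the running configuration. The key geometric observation is that, after a single translation, all of $\Pole(B)$ sits on the sphere $S_t$, so Lemma~\ref{Brian Lemma} controls exactly how many poles can be trapped inside any fixed affine subspace.

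To set up, I would fix any $b \in B$ (a bad set is necessarily nonempty, since the empty intersection would be all of $\F_q^d$, whereas a union of $d$ spheres is not). By the definition of $\Pole(B)$, every $p \in \Pole(B)$ satisfies $\|p - b\| = t$, so $\Pole(B) - b \subseteq S_t$. Since affine (in)dependence is invariant under translation, it suffices to produce $J \subseteq \Pole(B)$ of size $a$ so that $(\{y,z\} \cup J) - b$ is an affinely independent subset of $S_t$ of size $a+2$.

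Next, I would carry out the greedy extension. Since $y \neq z$, the set $\{y,z\}$ is affinely independent. Suppose inductively that for some $0 \le k \le a-1$ we have produced $J_k = \{j_1,\ldots,j_k\} \subseteq \Pole(B)$ with $\{y,z\} \cup J_k$ affinely independent. This $(k+2)$-point set spans an affine subspace $A_k$ of dimension $k+1$. If every pole lay in $A_k$, then translating by $b$ and applying Lemma~\ref{Brian Lemma} to the affine subspace $A_k - b$ of dimension $k+1$ would give
\begin{equation}
|\Pole(B)| \;=\; |\Pole(B) - b| \;\leq\; |(A_k - b) \cap S_t| \;\leq\; 2q^{k},
\end{equation}
which contradicts the hypothesis $|\Pole(B)| > 2q^{a-1} \geq 2q^{k}$ (using $k \le a-1$). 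Hence some $j_{k+1} \in \Pole(B) \setminus A_k$ exists, and appending it keeps the configuration affinely independent. Iterating for $k = 0, 1, \ldots, a-1$ yields $J = J_a$ of size $a$ with the desired property.

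I do not foresee a serious obstacle: the numerical threshold $|\Pole(B)| > 2q^{a-1}$ is tuned exactly so that the greedy loop survives its final, tightest step. The only substantive content is the opening reduction that puts every pole on a common sphere after translation by a chosen $b \in B$, which is what allows Lemma~\ref{Brian Lemma} to be applied uniformly at every stage.
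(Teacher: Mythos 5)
Your proposal is correct and follows essentially the same route as the paper's proof: fix $b\in B$ so that all poles lie on $S_t+b$, then greedily extend an affinely independent set starting from $\{y,z\}$, using Lemma~\ref{Brian Lemma} to show the affine span of the current configuration traps at most $2q^{k}$ poles, which is fewer than $|\Pole(B)|$ at every stage. Your added observations (that $B\neq\emptyset$ and that $y\neq z$ starts the induction) are minor touches of extra care not spelled out in the paper.
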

\begin{proof}
Fix $b\in B$ and note that all points in $\Pole(B)$ lie on the sphere $S_t + b$. We build a sequence of sets $J_1 \subset J_2 \subset .... \subset J_a = J$ such that $|J_i| = i$ and each $J_i \cup \{y,z\}$ is affinely independent. Suppose we have chosen $J_i$. Then then we can choose any point for $J_{i+1}\sm J_i$ that does not lie in the $(i+1)$-dimensional affine subspace $A$ spanned by $\{y,z\} \cup J_i$. Since the points we have to choose from lie on $S_t + b$ this rules out the points in $(S_t + b) \cap A = b+(S_t \cap (A-b))$. By Lemma \ref{Brian Lemma} this set has size $\leq 2q^i$. So by assumption there is a point in $\Pole(B)$ we can choose. 
\end{proof}

With all the pieces in place, we can now complete our proof.
\begin{lem}\label{counting bad sets}
Fix some set $B$ with $|B|=k$. Then $B$ is bad in at most $C_dq^{d^2 - kd - d +k - 1}$ affinely nondegenerate prisms. 
\end{lem}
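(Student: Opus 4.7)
The plan is to fix the set $B$ with $|B|=k$ and bound the number of affinely nondegenerate prisms $P=(y,z,x^1,\ldots,x^d)$ with $B \subset \mathcal{C}(P)$ and $B$ $P$-bad, treating the placement of $B$ among the $d$ center positions as an $O_d(1)$ factor. First I handle reductions: since the center of an affinely nondegenerate prism is affinely independent, so is every subset, and hence if $B$ is not affinely independent then it is bad in zero such prisms. The extreme cases $k=0$ (which would require $\Pole(\emptyset)=\mathbb{F}_q^d$ to be covered by $d$ spheres of size $\approx q^{d-1}$, impossible for $q \gg d$) and $k=d$ (where $B'=\emptyset$ makes the badness condition $\Pole(B) \subset \varnothing$ while $y,z \in \Pole(B)$) contribute no prisms. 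So I assume $B$ is affinely independent and $1\le k \le d-1$, and set $p = |\Pole(B)|$.

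The key preliminary observation is that badness already forces $p$ to be small. Indeed, if $B$ is $P$-bad, then $\Pole(B)\subseteq \bigcup_{w \in B'}(S_t+w)$, and for each $w \in B'$ the set $B\cup\{w\}$ is affinely independent (as a subset of $\mathcal{C}(P)$), so by Lemma \ref{intersect spheres} $|\Pole(B) \cap (S_t+w)| = |\bigcap_{b \in B \cup \{w\}}(S_t+b)| \leq 2q^{d-k-1}$, and hence $p \leq 2(d-k)\, q^{d-k-1}$.

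Now I count. Fix $(y,z) \in \Pole(B)^2$ with $y \neq z$; this contributes a factor of at most $p^2$. To bound the number of $B' = (w^1, \ldots, w^{d-k})$ giving a bad prism, I apply Lemma \ref{poles size q^a}: choose the largest positive integer $a$ with $p > 2q^{a-1}$; since $p \leq 2(d-k)q^{d-k-1}$ one has $a \leq d-k \leq d-1$, so the lemma applies and furnishes $J \subseteq \Pole(B)$ with $|J| = a$ and $J \cup \{y,z\}$ affinely independent. Badness forces each $u \in J$ to lie in $S_t + w^i$ for some $i$, giving a map $\sigma: J \to \{1,\ldots,d-k\}$; for each $\sigma$ with fibres $T_i = \sigma^{-1}(i)$, the center point $w^i$ must lie in $(S_t+y)\cap (S_t+z)\cap \bigcap_{u\in T_i}(S_t+u)$, which by Lemma \ref{intersect spheres} has size at most $2q^{d-2-|T_i|}$ (using affine independence of $T_i \cup \{y,z\} \subseteq J \cup \{y,z\}$). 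Multiplying across $i$ with $\sum_i |T_i| = a$, and summing over the $\leq (d-k)^a$ choices of $\sigma$, gives
\[
\#\,B' \leq C_d\, q^{(d-2)(d-k)-a}.
\]

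The final step is to optimize in $a$ depending on the size of $p$. If $2q^{a-1} < p \leq 2q^a$ for some $a \leq d-k-1$, then $p^2 \leq 4q^{2a}$ and the prism count is bounded by $C_d q^{(d-2)(d-k)+a} \leq C_d q^{(d-1)(d-k)-1}$. If instead $p > 2q^{d-k-1}$, then I take $a = d-k$ and use the badness-derived bound $p \leq C_d q^{d-k-1}$ (rather than $p \leq 2q^a$), obtaining $C_d q^{2(d-k-1)+(d-3)(d-k)} = C_d q^{(d-1)(d-k)-2}$. The tiny-$p$ case $p \leq 2$ is handled directly: then $\Pole(B) = \{y,z\}$ and each $w^i \in (S_t+y)\cap(S_t+z)$, giving at most $(2q^{d-2})^{d-k}$ choices, so at most $C_d q^{(d-2)(d-k)} \leq C_d q^{(d-1)(d-k)-1}$ prisms. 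Combining the cases yields the desired bound $C_d q^{(d-1)(d-k)-1} = C_d q^{d^2-kd-d+k-1}$.

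The main obstacle I foresee is the case analysis in $p$ and ensuring the affine-independence hypotheses of both Lemma \ref{intersect spheres} and Lemma \ref{poles size q^a} hold in each regime; this is resolved by picking $J$ via Lemma \ref{poles size q^a} and then inheriting affine independence for the subsets $T_i \cup \{y,z\}$. The delicate point is that the crude substitution $p \leq 2q^a$ suffices only for $a \leq d-k-1$, and when $p > 2q^{d-k-1}$ one must instead feed in the sharper badness bound $p \leq C_d q^{d-k-1}$ to finish.
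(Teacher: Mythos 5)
Your proposal is correct and follows essentially the same strategy as the paper's proof: bound $|\Pole(B)|$ by $2(d-k)q^{d-k-1}$ via Lemma \ref{intersect spheres}, draw the tail of the new prism from $\Pole(B)$, extract an affinely independent subset $J\subset\Pole(B)$ via Lemma \ref{poles size q^a}, and use the covering condition to charge each element of $J$ to one of the new center points, bounding the choices for each by Lemma \ref{intersect spheres}. Your partition map $\sigma$ and explicit case analysis in $p$ are minor variants of the paper's cover-based map $\phi$ and its optimization over the parameter $\ell$.
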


Note that this would suffice to prove our main result in the case $d\geq 3$. To see why this is true, let $M_k(E)$ be the number of affine nondegenerate prisms with affinely independent centers in $E$ that admit a bad set of size $k$, and let $M(E)=\sum_{k=1}^{d-1}{M_k(E)}$. Then we have that for $|E|>q^{d-1}$,
\begin{align}
M(E) &\leq C_d \sum_{k=1}^{d-1} |E|^k q^{d^2 - kd - d + k - 1} \leq C_d (d-1)E^{d-1}q^{d^2 - (d-1)d - d + (d-1)} = C_d (d-1)E^{d-1}q^{d-2} \\&< C_d dE^{d-1}q^{d-2}.
\end{align}
We want to show that $N'_d(E) > M(E)$. Assuming $d=3$ or $|E|>C_d q^{d-\frac{1}{d-1}}$ we have by Lemma \ref{almost all prisms are AN} that $N'_d > C\frac{|E|^{d+2}}{q^{2d}}$. So it suffices to show that 
\begin{equation}
\frac{|E|^{d+2}}{q^{2d}} > C_d d|E|^{d-1}q^{d-2},
\end{equation}
which is true whenever 
\begin{equation}
|E| \geq C_d q^{d-\frac{2}{3}}.
\end{equation}
In the case of $d=3$ this is the strongest bound on $|E|$. Otherwise it is subsumed under the stronger constraint of $|E| > C_d q^{d - \frac{1}{d-1}}$ required for Lemma \ref{almost all prisms are AN}. This completes the proof of Theorem \ref{main thm} when $d\geq 3$, and the $d=2$ case follows immediately from techniques in \cite{FIMW}:  First prune the set $E$, obtaining $E'\subseteq E$ with a positive proportion of the points in $E$, such that every point in $E'$ has large vertex degree in $\mathcal{G}_t(E)$.  In \cite{FIMW} for example, they obtain $|E'|\geq \frac{1}{32}|E|$ where every point in $E'$ is adjacent to at least 100 points in $E$, which is more than sufficient here.  Then apply Lemma 4.1 from that paper, and we have constructed the desired configuration in $\mathbb{F}_q^2$ as long as $|E|\geq Cq^{\frac{7}{4}}$.  To finish proving Theorem \ref{main thm}, it only remains to prove Lemma \ref{counting bad sets}.
\begin{proof}
Consider an affinely nondegenerate prism $P$ with center $\mathcal{C} (P) = \{ x^1, \ldots, x^d \}$ and tail $\mathcal{T} (P) = \{ y,z \}$. Suppose that $B = \{x^1, \ldots, x^k \}$ is $P$-bad. We will count $M_B(E)$, the number of choices for other nondegenerate prisms $Q$ for which $B$ is $Q$-bad. The key observation of the proof is that the tails of $Q$ must be chosen from among the poles of $B$. However, the more poles $B$ has, the more constrained the choices for center points of $Q$ are since the condition of badness requires each pole to be distance $t$ away from at least one center point. 
    
First we bound the size of $\Pole(B)$. Each pole of $B$ must also be distance $t$ away from some other point in $\C(P)$. Since $\C(P)$ is affinely independent we apply Lemma \ref{intersect spheres} and obtain:
\begin{equation}\label{max poles}
\Pole(B) \subset \bigcup_{a \in \C(P) \sm B} \left((S_t + a) \cap \bigcap_{b \in B} S_t + b\right) < 2(d-k)q^{d-k-1}.
\end{equation}
Let $\ell$ be minimal such that $\Pole(B) \leq 2q^{\ell}$. Then we have $\leq 4q^{2\ell}$ choices of tail for $Q$. Fix a choice of tail $\{y,z\}$, and we will count the number of ways to choose the center. By assumption $\Pole(B) > 2q^{\ell-1}$ so by Lemma \ref{poles size q^a} there exists a subset $J \subset \Pole(B)$ with $|J| =\ell$ and $J \cup \{y,z\}$ affinely independent. Choose any such $J$. Let $\phi: E \setminus B \rightarrow \mathcal{P}(J)$ be defined by $\phi(x) = J \cap \Pole(x)$. Consider $A = (a_1,a_2,\dots,a_{d-k}) \in (E \setminus B)^{d-k}$, a tuple with distinct elements. Let $T_A = (\phi(a_1),\phi(a_2),\dots,\phi(a_{d-k})) \in (\mathcal{P}(J))^{d-k}.$ 
    
Suppose that $\mathcal{C}(Q) = B \cup A$. If $B$ is $Q$-bad then 
\begin{equation}\label{union of phi ai}
    \bigcup_{i = 1}^{d-k} \phi(a_i) = J.  
\end{equation}
So we can limit the choices of other center points to only those tuples which fulfill the above condition. That is, we fix ahead of time the values $Y_i = \phi(a_i)$ and count the number of choices of center points that realize those values. Noting that $J\cup \{y,z\}$, we have by Lemma \ref{intersect spheres} that there are $\leq 2q^{d-2-|Y_i|}$ choices for $a_i$. Further note that by Equation \ref{union of phi ai} we have $\sum |Y_i| \geq \ell$. We compute the following:
    \begin{align}
        M_B(E) \leq  4 q^{2\ell} \sum_{\substack{(Y_1, \ldots,Y_{d-k} ) \\ \cup Y_i = J}} \prod_{i=1}^{d-k} 2q^{d - 2 - |Y_i|} &= 4 q^{2\ell} \sum_{\substack{(Y_1, \ldots,Y_{d-k} ) \\ \cup Y_i = J}} (2q^{d-2})^{d-k} \prod_{i=1}^{d-k} q^{- |Y_i|} \\
        &= C_d q^{2\ell} \sum_{\substack{(Y_1, \ldots,Y_{d-k} ) \\ \cup Y_i = J}} (2q^{d-2})^{d-k} q^{\left(- \sum_{i = 1}^{d-k} |Y_i| \right)} \\
        &\leq C_d q^{2\ell} \sum_{\substack{(  Y_1, \ldots,Y_{d-k} ) \\ \cup Y_i = J}} (2q^{d-2})^{d-k} q^{-\ell} \\
        &= C_d \ q^{d^2 - kd - 2d + 2k + \ell},
    \end{align}
    where $C_d$ is the number of $(Y_1, \ldots, Y_{d-k})$ such that $\cup Y_i = J$, which is a constant dependent on $d$. Notice $q^{d^2 - kd - 2d + 2k + \ell}$ is maximized when $\ell$ attains its maximum value. By Equation \ref{max poles}, $\ell = d - k - 1$. Therefore, 
    \begin{align}
        M_B(E) \leq 4q^{2a} \sum_{\substack{(Y_1, \ldots,Y_{d-k} ) \\ \cup Y_i = J}} \prod_{i=1}^{d-k} q^{d - 2 - \alpha_i} &\lesssim q^{d^2 - kd - d + k - 1 }.
    \end{align}
    As the number of bad sets of size $k$ is $\leq |E|^k$, we have that 
    \begin{equation}
       M_k(E) \ \lesssim \ |E|^k q^{d^2 - kd - d + k - 1 }.
    \end{equation}
\end{proof}  

\section{Conclusion}
\subsection{Connection to the Single-Parameter Case}
Note that the construction we use to shatter $d$ points also suffices to shatter $d$ points using $\Hh_t^{'d}$, the single-parameter classifiers studied in \cite{FIMW}. Indeed if $A\subset \C(P)$ is not $P$-bad then by definition we can find a point $y$ that is a pole of $A$ but is not distance $t$ away from any other point of $\C(P)$, and so the classifier $h_y \in \Hh_t^{'d}$ restricted to $\C(P)$ is the indicator function on $A$. And so we have that the VC-dimension of $\Hh_d^{'t}$ is at least $d$ provided $E$ is large enough. 

However, this exact construction cannot work to shatter $d+1$ points since doing so would also involve shattering $d+1$ with our two-parameter classifiers, which is impossible. One might wonder whether a slightly different construction might work, where instead of looking for prisms we look for sets $(z,x^1,...,x^{d+1})$ where $z$ is the only common pole of the $x^i$, a sort of ``star.'' The number of these stars could be counted using Theorem \ref{chain count} and the same technique as Theorem \ref{prism count}, just replacing 2-paths with 1-paths. However an issue arises comes with counting the number of stars a $d$-set can be bad in. Our pigeonholing argument works because the condition of badness reduces the number of poles a bad $k$-set can have by a power of $q$, while also restricting the number of prisms such a set can be bad in if it has many poles. But by Lemma \ref{intersect spheres}, an affinely independent set of size $d$ has at most $2$ poles, and thus no such restriction could exist. 

\subsection{Future Work}
There are a number of possible directions for future work. One would be attempting to take our results further, in the sense of improving the exponent constraining the size of $E$. Our proof required showing that a positive proportion of nondegenerate prisms are affinely nondegenerate, which placed a very strong constraint on $|E|$ in the $d>3$ case. Were an approach to be found that did away with this requirement or weakened this constraint, our bound could likely be improved. 

Another direction would be trying to obtain similar results for other sets of classifiers on subsets of $\F_q^d$. We obtained our classifiers from those in \cite{FIMW} by adding an additional parameter; one could consider adding even more parameters. We suspect this case could be fairly easily resolved by similar techniques to those used here, but the problem could be changed further. Finite field VC-dimension problems such as this are relatively unexplored, so there are many different avenues to pursue.

%\printbibliography

\end{document}